\newcommand{\topdeg}{\operatorname{topdeg}}
\newcommand{\Potm}{\mathfrak{P}}
\newcommand{\Pol}{\operatorname{Pol}}
\newcommand{\reg}{\operatorname{reg}}
\newcommand{\Tr}{\operatorname{Tr}}
\newcommand{\Quot}{\operatorname{Quot}}
\newcommand{\BN}{{\mathbb N}}
\theoremstyle{plain}
\newtheorem{theorem}{Theorem}
\newtheorem{corollary}[theorem]{Corollary}
\newtheorem{problem}[theorem]{Problem}
\newtheorem{lemma}[theorem]{Lemma}
\newtheorem{proposition}[theorem]{Proposition}
\theoremstyle{definition}
\theoremstyle{remark}
\newcounter{hours}\newcounter{minutes}
\newcommand{\printtime}{%
        \setcounter{hours}{\time/60}%
        \setcounter{minutes}{\time-\value{hours}*60}%
        \thehours\,h\ \theminutes\,min}
\begin{document}

\title[Top Degree of Coinvariants] {On The Top Degree of Coinvariants}
\date{\today,\ \printtime}
\author{ Martin Kohls}
\address{Technische Universit\"at M\"unchen \\
 Zentrum Mathematik-M11\\
Boltzmannstrasse 3\\
 85748 Garching, Germany}
\email{kohls@ma.tum.de}

\author{M\"uf\.it Sezer}
\address { Department of Mathematics, Bilkent University,
 Ankara 06800 Turkey}
\email{sezer@fen.bilkent.edu.tr}
\thanks{We thank T\" {u}bitak for
funding a visit of the first author to Bilkent University, and
Gregor Kemper for funding a visit of the second author to TU
M\"unchen. Second author is also partially supported by
T\"{u}bitak-Tbag/112T113 and T\"{u}ba-Gebip/2010. We also thank M. Domokos for
pointing out that Corollary \ref{DomokosLemma} has already appeared in his paper \cite{DomokosVector}.}

\subjclass[2000]{13A50} \keywords{Coinvariants,
vector invariants}
\begin{abstract}
For a finite group $G$ acting faithfully on a finite dimensional
$F$-vector space $V$, we show that in the  modular case, the top
degree of the vector coinvariants grows unboundedly:
$\lim_{m\to\infty} \topdeg F[V^{m}]_{G}=\infty$. In contrast, in the
non-modular case we identify a situation where the top degree of the vector
coinvariants remains constant. Furthermore, we present a more elementary proof of Steinberg's
theorem  which says that the group order is a lower bound for the
dimension of the coinvariants which is sharp if and only if the
invariant ring is polynomial.
\end{abstract}

 \maketitle

\section{Introduction}
A central problem in invariant theory is to compute the generators
of the invariants of a group action. One crucial element in this
task is determining the degrees of the generators as the knowledge
of these degrees reduces this problem to a problem in a finite
dimensional vector space.  This gives obtaining efficient degree
bounds  a big computational
 significance and research in this direction has always been fashionable since the days
 of Noether to our days, with some
recent spectacular break-throughs, e.g. \cite{MR2811606}.
 Before
we go into more details, we fix our setup. For a shorthand
notion, we will call a finite dimensional representation $V$ of a
finite group $G$ over a field $F$ a \emph{$G$-module}. The action
of $G$ on $V$ induces an action on the symmetric algebra $F[V]=S(V^{*})$  that is given by $\sigma (f)=f\circ\sigma^{-1}$ for
$\sigma \in G$ and $f\in F[V]$. Let $F[V]^{G}$ denote the
corresponding ring of invariants. By a classical theorem of  Noether, it is a finitely
generated algebra, and $\beta(F[V]^{G})$, the Noether number of
the representation, denotes the maximal possible degree of an
indecomposable element, that is, the smallest number $b$ such that
invariants of degree $\le b$ generate the invariant ring. We also
define $\beta (G)=\sup_V \beta(F[V]^{G})$. Another central object
is the Hilbert ideal $I:=F[V]^{G}_{+}F[V]$,  the ideal in $F[V]$
generated by invariants of positive degree. In this paper, we
study the algebra of coinvariants, which is the quotient ring
$F[V]_{G}:=F[V]/I$. This finite dimensional, graded algebra
encodes several interesting properties of the invariant ring and
there has been a fair amount of research on it, see
\cite{MR2852288, MR1424447,DwyerWilkerson, MR2264069,Kane,
KohlsSezerHilbert,SezerCoinvariants,
  MR2193198,  MR1972694,
  SmithForum,   MR0167535} and the
references there. The top degree of the coinvariants, denoted
$\topdeg F[V]_{G}$, is defined to be the largest degree in which
$F[V]_{G}$ is non-zero. This number shares a similar interest for
coinvariants as the Noether number does for invariants.

Equivalently, the top degree can be defined as the smallest number
$d$ such that every monomial $m\in F[V]$ of degree $>d$ is
contained in the Hilbert ideal. Note that this also implies that
the Hilbert ideal is generated by elements of degree at most
$d+1$, a fact which played an important role in the proof of the
Noether bound in the non-modular case. However, it is conjectured
\cite[Conjecture 3.8.6]{MR1918599} that even in the modular case
the group order  is an upper bound for the degrees of the
generators of the Hilbert ideal, which as we will see,  may be
much smaller than the top degree.

 Another natural interpretation comes from
regarding $F[V]$ as a (finite) $F[V]^{G}$-module. Take a minimal
set of homogeneous module generators $g_{i}$ of $F[V]$ over
$F[V]^{G}$, so $F[V]=\sum_{i=1}^{t}F[V]^{G}g_{i}$. From the graded
Nakayama lemma, it follows that the top degree $d$ equals the
maximum  of the degrees of the generators, and the number of
generators equals the dimension of the coinvariants as a vector
space.

Recall that the transfer of  $f\in F[V]$ is defined by
$\Tr(f)=\sum_{\sigma\in
  G}\sigma(f)$. Another important application of the top degree is that in the
modular case, it yields an upper bound for the maximal degree of
an indecomposable transfer:  Take $f\in F[V]$ homogeneous. Then we
can write $f=\sum_{i=1}^{t}h_{i}g_{i}$  with homogeneous
invariants $h_{i}$ and module generators $g_{i}$ as above.
Therefore, $\Tr(f)=\sum_{i=1}^{t}h_{i}\Tr(g_{i})$. Assume
$\deg(f)$ is bigger than the top degree of $F[V]_{G}$. Then all
$h_{i}$'s are zero or of positive degree. We are done if also all
$\Tr(g_{i})$'s are zero or of positive degree. Note that one of
the module generators, say $g_{1}$, is a constant.  Since we are
in the modular case we have $\Tr(1)=|G|\cdot 1=0$, so we are done.
Knowing the maximal degree of an indecomposable transfer has been
very critical so far, since in almost all modular cases where the
Noether number is known, there is an indecomposable transfer of
degree equal to the Noether number, see \cite{MR2264069}. In the
non-modular case (i.e. the characteristic of $F$ does not divide
the group order $|G|$), the invariant ring is generated by transfers
and so a bound for the degree of an indecomposable transfer is a
bound for the Noether number.  Since $\Tr(1)\ne 0$,  the argument
above does not carry over to this characteristic. Nevertheless, in
the non-modular case, the top degree plus one is   an upper bound
for the Noether number and this bound is sharp:  The
Noether number corresponding to the natural action of $S_{2}$ on
$F[x_{1},x_{2}]$ is two while the top degree of the coinvariants
is one.

We now give an outline of the paper. Section \ref{topDegNonMod} is
mainly concerned with  the non-modular case, where we collect some
 consequences of previous work on the top degree of coinvariants.
Most notably, a quite recent result of Cziszter and Domokos
implies that for a given non-modular group $G$, the maximal
possible top degree equals the maximal possible Noether number
minus one. In particular, $|G|-1$ gives an upper bound for the top
degree.

In contrast, we show in section \ref{TopDegMod} that for a given
faithful modular representation $V$, the top degree of the vector
coinvariants $F[V^{m}]_{G}$ grows unboundedly with $m$. This also
fits nicely with a result of Richman \cite{MR1423197} which
asserts the similar behavior for the Noether number of the vector
invariants $F[V^{m}]^{G}$.

In the following section \ref{topdegSymmetric} we consider a non-modular situation where the lead term ideal
of $F[V]^{G}_{+}F[V]$ is generated by pure powers of the variables. In this case we show that the top degree of
the vector coinvariants $F[V^{m}]_G$ is constant. This way, for the  natural
action of the symmetric group $S_n$ on a polynomial ring with $n$ variables
we get a new proof that the top degree of any of the vector coinvariants of this action is  ${n\choose 2}$.

In section \ref{Steinberg} we will give a new elementary proof of
Steinberg's celebrated theorem which states that the group order
is a lower bound for the dimension of the coinvariants with
equality holding if and only if the invariant ring is polynomial.

\section{Top degree  in the non-modular case}\label{topDegNonMod}

In this section we note several facts about the top degree of
coinvariants in the non-modular case, which are a bit spread out
in the literature. Although these statements follow rather quickly
from previous results,  it seems that the statements themselves
have not been formulated in terms of coinvariants before. Using a
very recent result of Cziszter and Domokos \cite{DomokosDavenport}
 we obtain  in Theorem \ref{Noether} that the supremum of the top degrees of coinvariants is one less than the Noether number  of the group.
Since the Noether number is bounded by the group order, we establish $|G|-1$ as an upper bound for the top degree of coinvariants of
any non-modular representation. This upper bound also follows directly from
Fogarty's proof of the Noether bound.  We take the crucial part of this proof here as Lemma \ref{fogarty}. Using this lemma we also obtain a relative bound
for the top degree of  coinvariants, see Proposition \ref{relGh}.
We end this section with a brief discussion of the relation between the
Davenport constant and the top degree in the abelian group case.

\begin{theorem}\label{Noether}
Assume that the characteristic of $F$ does not divide the group
order $|G|$. Then for any $G$-module $V$, we have
\[
\beta(F[V]^{G})\le \topdeg F[V]_{G}+1\le \beta(G)\le|G|.
\]
In particular, we have that
\[
\topdeg(G)+1:=\sup_V \topdeg F[V]_{G} +1=\beta(G).
\]
\end{theorem}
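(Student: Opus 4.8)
The plan is to establish the chain of inequalities
\[
\beta(F[V]^{G})\le \topdeg F[V]_{G}+1\le \beta(G)\le |G|
\]
first, and then observe that taking the supremum over all $V$ collapses the outer terms onto one another. The rightmost inequality $\beta(G)\le |G|$ is the classical non-modular Noether bound, which I would simply cite. So the real content is the two middle inequalities.

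For the first inequality $\beta(F[V]^{G})\le \topdeg F[V]_{G}+1$: let $d=\topdeg F[V]_{G}$. As noted in the introduction, this means every monomial of degree $>d$ lies in the Hilbert ideal $I=F[V]^{G}_{+}F[V]$, hence $I$ is generated (as an ideal) by homogeneous invariants of degree at most $d+1$; equivalently, $F[V]=\sum_i F[V]^{G} g_i$ with module generators $g_i$ of degree $\le d$. Now take any homogeneous invariant $f$ of degree $>d+1$. In the non-modular case the Reynolds operator $R=\frac{1}{|G|}\Tr$ is an $F[V]^{G}$-linear projection onto $F[V]^{G}$. Writing $f=\sum_i h_i g_i$ with $h_i\in F[V]^{G}$ homogeneous, $\deg h_i = \deg f - \deg g_i \ge \deg f - d > 1$, so each $h_i$ is a positive-degree (indeed decomposable-target) invariant, and applying $R$ gives $f=R(f)=\sum_i h_i R(g_i)$ with each $R(g_i)$ an invariant of degree $\le d$. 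Hence $f$ decomposes as a sum of products of two positive-degree invariants, so $f$ is decomposable. This shows $\beta(F[V]^{G})\le d+1$.

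For the second inequality $\topdeg F[V]_{G}+1\le \beta(G)$: here I would invoke the very recent result of Cziszter and Domokos mentioned in the text (reference \cite{DomokosDavenport}), which is precisely the statement that in the non-modular case the maximal top degree over all modules equals the Noether number $\beta(G)$ minus one. Concretely that result provides, for every $V$, the bound $\topdeg F[V]_{G}\le \beta(G)-1$, which is the inequality we want; and it also supplies a module $V$ achieving equality, which is exactly what is needed for the ``in particular'' clause. If one wants a self-contained argument for just the inequality direction, one can instead use Lemma \ref{fogarty} (Fogarty's argument), which the excerpt promises will be stated next: Fogarty's proof shows every monomial of degree $\ge \beta(G)$ — or the appropriate threshold — lies in the Hilbert ideal, giving $\topdeg F[V]_{G}\le \beta(G)-1$ directly.

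Finally, for the displayed equality: the chain already gives $\topdeg F[V]_{G}+1\le \beta(G)$ for every $V$, so $\sup_V(\topdeg F[V]_{G}+1)\le \beta(G)$. For the reverse, either the Cziszter--Domokos result exhibits a module attaining $\topdeg F[V]_{G}=\beta(G)-1$, or one argues: pick $V$ with $\beta(F[V]^{G})=\beta(G)$ (possible by definition of $\beta(G)$ as a supremum, at least up to approximation, and in fact attained in the non-modular case), and then the first inequality $\beta(G)=\beta(F[V]^{G})\le \topdeg F[V]_{G}+1$ forces $\topdeg F[V]_{G}+1\ge \beta(G)$. Combining the two bounds yields $\sup_V(\topdeg F[V]_{G}+1)=\beta(G)$, i.e. $\topdeg(G)+1=\beta(G)$. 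I expect the main obstacle to be purely bibliographic rather than mathematical: making sure the cited Cziszter--Domokos statement is quoted in exactly the form needed (existence of an extremal module, not just the inequality), so that the ``in particular'' equality is fully justified rather than only the $\le$ direction.
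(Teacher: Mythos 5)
Your overall route coincides with the paper's: the non-modular (Reynolds operator) argument for the first inequality, the Cziszter--Domokos result for the second, the classical Noether bound for the third, and the equality by choosing $V$ with $\beta(F[V]^{G})=\beta(G)$ (the supremum is attained since $\beta(G)\le|G|$). However, your spelled-out proof of the first inequality has a genuine gap. You write $f=\sum_i h_ig_i$ using module generators $g_i$ of $F[V]$ over $F[V]^{G}$ and conclude that $f=R(f)=\sum_i h_iR(g_i)$ exhibits $f$ as a sum of products of two positive-degree invariants. But any such generating set must contain a nonzero constant, say $g_1$ (degree $0$ of $F[V]$ is not reached by the other terms), and then $R(g_1)$ is again a nonzero constant, so the term $h_1R(g_1)$ is just a scalar multiple of $h_1$, an invariant of degree $\deg f$ about which nothing is known. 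Worse, $f=(c^{-1}f)\cdot g_1$ with $g_1=c$ constant is itself a legitimate choice of the decomposition, for which your argument returns the tautology $f=f$. The standard repair --- which is what the paper's one-line assertion ``$F[V]^{G}$ is generated by invariants of degree at most $d+1$'' unwinds to --- works inside the Hilbert ideal rather than with module generators of $F[V]$: every monomial of degree $d+1$ lies in $I=F[V]^{G}_{+}F[V]$, so a homogeneous invariant $f$ of degree $>d+1$ (factor each of its monomials through a degree-$(d+1)$ divisor) can be written as $f=\sum_k h_kb_k$ with $h_k\in F[V]^{G}_{+}$ homogeneous of degree $\le d+1<\deg f$ and $b_k$ homogeneous of positive degree; applying $R$ then gives $f=\sum_k h_kR(b_k)$ with \emph{both} factors of positive degree, hence $f$ decomposable.

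Two smaller points. First, your fallback for the middle inequality via Lemma \ref{fogarty} does not deliver it: taking $J=F[V]_{+}$ in that lemma yields $J^{|G|}\subseteq J^{G}F[V]\subseteq I$ and hence only $\topdeg F[V]_{G}+1\le|G|$, not the sharper $\topdeg F[V]_{G}+1\le\beta(G)$; for the latter the Cziszter--Domokos lemma is genuinely needed, and the paper uses it in the precise form that for every $V$ there is an irreducible $U$ with $\topdeg F[V]_{G}+1\le\beta(F[V\oplus U]^{G})\le\beta(G)$. Second, for the ``in particular'' clause you do not need an extremal module from Cziszter--Domokos at all: as you note in your alternative, a module with $\beta(F[V]^{G})=\beta(G)$ combined with the \emph{first} inequality already forces $\topdeg F[V]_{G}+1\ge\beta(G)$, and this is exactly how the paper concludes.
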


\begin{proof}
Let $I$ denote the Hilbert ideal of $F[V]^{G}$ and $d$ denote the
top degree of $F[V]_{G}$. As mentioned in the introduction  $I$ is
generated by elements of degree at most $d+1$. As we are in the
non-modular case, this implies that $F[V]^{G}$ is generated by
invariants of degree at most $d+1$, which proves the first
inequality. By \cite[Lemma 3.1]{DomokosDavenport}, for any
$G$-module $V$ there exists an irreducible $G$-module $U$ such
that $\topdeg F[V]_{G}+1\le \beta(F[V\oplus U]^{G})$, which proves
the second inequality. Finally, the Noether number is at most the
group order in the non-modular case, see \cite{FleischmannNoether,
FogartyNoether}. Now the last statement follows from choosing a
$G$-module $V$ with $\beta(F[V]^{G})=\beta(G)$.
\end{proof}

There are many bounds on $\beta(G)$ in invariant theory
literature.  By this theorem, they translate into bounds for
$\topdeg(G)+1$. For example, if $H$ is a normal subgroup of $G$,
in the non-modular case we have $\beta(G)\le\beta(H)\beta(G/H)$
\cite[(3.1)]{Lempken}. So we get $\topdeg (G)+1\le (\topdeg
(G/H)+1)(\topdeg (H)+1)$.

However, for a given module $V$, its
Noether number can be much smaller than the top degree. For
example, for the natural action of the symmetric group on $n$
variables, the invariants have Noether number $n$, while the top
degree of the coinvariants is ${n\choose 2}$.

A key step in Fogarty's proof of $\beta(G)\le|G|$ in the non-modular case is the following lemma \cite{FogartyNoether}.

\begin{lemma}[{See \cite[Lemma
 3.8.1]{MR1918599}}]
\label{fogarty} Let $A$ be a commutative ring with identity, $G$ a
finite group of automorphisms of $A$, and $J\subseteq A$ a
$G$-stable ideal. If the order of $G$ is invertible in $A$, then
$J^{|G|}\subseteq J^{G}A$.
\end{lemma}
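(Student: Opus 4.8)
The plan is to reduce the statement to showing $a_1\cdots a_{|G|}\in J^GA$ for arbitrary $a_1,\dots,a_{|G|}\in J$, since such products generate the ideal $J^{|G|}$; write $n=|G|$. Since $J^G:=J\cap A^G$ is fixed pointwise by $G$, the ideal $J^GA$ is $G$-stable, so $G$ acts on $\bar A:=A/J^GA$, still with $n$ invertible. Because $J^G\subseteq J$ we have $J^GA\subseteq J$, hence $J^GA\cap A^G=J^G$; as the functor of $G$-invariants is exact when $n$ is invertible, the image $\bar J$ of $J$ in $\bar A$ therefore satisfies $\bar J\cap\bar A^G=0$, and it suffices to prove $\bar J^{\,n}=0$.

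The main device is the norm polynomial. If $\mathfrak a$ is a $G$-stable ideal of a ring $B$ (with $G$ acting and $n=|G|$ invertible in $B$) such that $\mathfrak a\cap B^G=0$, then for $x\in\mathfrak a$ the polynomial $f_x(T)=\prod_{\sigma\in G}(T-\sigma(x))$ has $G$-invariant non-leading coefficients---the symmetric functions of the orbit of $x$---which lie in $\mathfrak a$, hence vanish; so $f_x(T)=T^n$, and evaluating at $T=x$ (the $\sigma=e$ factor is $0$) yields $x^n=0$. Applying this with $B=\bar A[X_1,\dots,X_n]$, $\mathfrak a=\bar J\cdot B$, and $x=a_1X_1+\cdots+a_nX_n$ gives $(a_1X_1+\cdots+a_nX_n)^n=0$ in $\bar A[X_1,\dots,X_n]$; comparing coefficients, $\binom n\alpha\,a^\alpha=0$ for every multi-index $\alpha$ with $|\alpha|=n$, where $a^\alpha=a_1^{\alpha_1}\cdots a_n^{\alpha_n}$. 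The coefficient of $X_1\cdots X_n$ is $n!\,a_1\cdots a_n$, so the lemma follows at once whenever $n!$ is invertible in $A$---in particular in characteristic $0$ or in characteristic $>n$.

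The main obstacle is removing the factorial when only $n=|G|$, and not $n!$, is invertible. Here one must also feed in the transfer: every monomial $m$ in $a_1,\dots,a_n$ lies in $\bar J$, whence $\Tr(m)\in\bar J\cap\bar A^G=0$ in $\bar A$; symmetrizing one factor at a time, $\sum_{\sigma\in G}a_1\cdots a_{n-1}\sigma(a_n)=\bigl(\sum_{\sigma}\sigma(a_n)\bigr)a_1\cdots a_{n-1}=0$, and likewise for each slot and for all $G$-translates of the $a_i$. Combining these partial-symmetrization relations with the identities $\binom n\alpha a^\alpha=0$, and bookkeeping the $G$-orbits of monomials, one descends to $a_1\cdots a_n=0$. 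A convenient way to structure the argument is to reduce first to $G$ simple: given a proper nontrivial normal subgroup $H\trianglelefteq G$, all divisors of $|G|$ are invertible in $A$, the lemma for $H$ acting on $A$ gives $J^{|H|}\subseteq(J\cap A^H)A$, the lemma for $G/H$ acting on $A^H$ gives $(J\cap A^H)^{[G:H]}\subseteq(J\cap A^G)A^H$, and hence $J^{|G|}=(J^{|H|})^{[G:H]}\subseteq(J\cap A^G)A$; this leaves $G$ cyclic of prime order, where the identities above are pushed through directly. This factorial-removal step is the real content of the lemma; everything else is formal.
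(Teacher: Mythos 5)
First, a point of reference: the paper does not prove this lemma at all --- it is quoted as a black box from \cite[Lemma 3.8.1]{MR1918599}, where it is due to Fogarty \cite{FogartyNoether}. So your attempt can only be judged on its own completeness, and it is not complete.

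The first two thirds of your argument are correct and nicely organized: $J^GA$ is $G$-stable, the Reynolds operator gives $\bar J\cap\bar A^G=0$ in $\bar A=A/J^GA$, and the norm-polynomial observation (all non-leading coefficients of $\prod_{\sigma}(T-\sigma(x))$ lie in $\mathfrak a\cap B^G=0$, so $x^n=f_x(x)=0$) is a clean way to obtain $\binom{n}{\alpha}a^\alpha=0$, in particular $n!\,a_1\cdots a_n=0$. But the entire content of Fogarty's lemma is the passage from ``$n!$ invertible'' to ``$n$ invertible'', and at exactly that point the proof stops being a proof: the sentence about ``combining these partial-symmetrization relations with the identities $\binom{n}{\alpha}a^\alpha=0$ and bookkeeping the $G$-orbits'' describes a hoped-for computation rather than performing it, and it is not clear that the relations you list (vanishing of transfers of monomials, plus the multinomial relations) formally imply $a_1\cdots a_n=0$ at all. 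The structural remedy does not rescue this: the normal-subgroup reduction is itself correct (and a nice observation), but a group with no proper nontrivial normal subgroup is an arbitrary finite \emph{simple} group, not a cyclic group of prime order, so nonabelian simple groups remain entirely unhandled; and even in the base case $|G|=p$ the coefficient of $X_1\cdots X_p$ is $p!=p\cdot(p-1)!$, and $(p-1)!$ has prime factors not dividing $p$, hence not invertible in a general $\mathbb{Z}[1/p]$-algebra $A$. The published proofs get around this by an identity that couples the group action to the product structure --- expanding $\sum_{\sigma\in G}\prod_{i=1}^{n}\bigl(a_i-\sigma(a_i)\bigr)$ in terms of the transfers $\Tr\bigl(\prod_{i\in S}a_i\bigr)$ --- rather than by polarizing a single norm polynomial; you should consult \cite{FogartyNoether} or \cite[Lemma 3.8.1]{MR1918599} for how that step is actually closed.
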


This lemma also yields a relative bound for the top degree of  coinvariants.

\begin{proposition}\label{relGh}
Assume $H$ is a normal subgroup of $G$ and the characteristic of $F$ does not
divide the index  $(G:H)$. Then we have the inequality
\[
\topdeg (F[V]_{G})+1 \le (G:H)(\topdeg(F[V]_{H})+1).
\]
\end{proposition}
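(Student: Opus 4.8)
\textit{Approach.} The plan is to reduce to Lemma \ref{fogarty}, applied \emph{not} to $G$ acting on $F[V]$ but to the quotient group $\bar G:=G/H$ acting on the intermediate invariant ring $F[V]^H$. This action is well defined because $H$ is normal (so $F[V]^H$ is $G$-stable) and $H$ acts trivially on $F[V]^H$; and the order $(G:H)$ of $\bar G$ is invertible in $F[V]^H$, since it is invertible in the ground field $F\subseteq F[V]^H$ by hypothesis. That is the only non-mechanical step; everything else is bookkeeping with powers of the irrelevant ideal.

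\textit{Set-up and reduction.} Write $\mathfrak m:=F[V]_+$ for the irrelevant ideal, put $d_H:=\topdeg F[V]_H$, and let $I_H:=F[V]^H_+F[V]$ and $I_G:=F[V]^G_+F[V]$ be the two Hilbert ideals. Since $F[V]$ is standard graded, $\mathfrak m^k=\bigoplus_{j\ge k}F[V]_j$; hence, by the monomial description of the top degree recalled in the introduction, ``$\topdeg F[V]_H=d_H$'' is exactly the statement $\mathfrak m^{d_H+1}\subseteq I_H$, and the assertion to be proved, $\topdeg F[V]_G+1\le (G:H)(d_H+1)$, is equivalent to $\mathfrak m^{(G:H)(d_H+1)}\subseteq I_G$. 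So it suffices to establish the chain
\[
\mathfrak m^{(G:H)(d_H+1)}=\bigl(\mathfrak m^{d_H+1}\bigr)^{(G:H)}\subseteq I_H^{(G:H)}\subseteq I_G,
\]
whose first inclusion is immediate from $\mathfrak m^{d_H+1}\subseteq I_H$.

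\textit{The middle inclusion.} First note that $F[V]^H_+$ is a $\bar G$-stable ideal of $F[V]^H$: for $\sigma\in G$ and $f\in F[V]^H$, each $h\in H$ fixes $\sigma(f)$ because $\sigma^{-1}h\sigma\in H$. Applying Lemma \ref{fogarty} with $A=F[V]^H$, group $\bar G$, and $J=F[V]^H_+$, and using $(F[V]^H)^{\bar G}=F[V]^G$ so that $J^{\bar G}=F[V]^G_+$, yields
\[
\bigl(F[V]^H_+\bigr)^{(G:H)}\subseteq F[V]^G_+\cdot F[V]^H .
\]
Extending ideals to $F[V]$ via the identity $(\mathfrak a B)^k=\mathfrak a^kB$ for an ideal $\mathfrak a$ of a subring $A\subseteq B$ (here $\mathfrak a=F[V]^H_+$, $A=F[V]^H$, $B=F[V]$) gives
\[
I_H^{(G:H)}=\bigl(F[V]^H_+\bigr)^{(G:H)}F[V]\subseteq F[V]^G_+\cdot F[V]^H\cdot F[V]=I_G,
\]
which closes the chain, and reading off degrees shows $F[V]_j\subseteq I_G$ for all $j\ge (G:H)(d_H+1)$, i.e. $\topdeg F[V]_G\le (G:H)(d_H+1)-1$.

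\textit{Where the difficulty lies.} There is no serious obstacle once one has the idea of running Fogarty's lemma over $F[V]^H$ with the group $G/H$ and observing that $(G:H)$ remains invertible there. The points that merit a line of care are: checking $F[V]^H_+$ is genuinely $\bar G$-stable (the only place normality of $H$ enters), the routine but easy-to-botch translations between ``top degree $\le d$'' and ``$\mathfrak m^{d+1}\subseteq$ Hilbert ideal'', and the identity $(\mathfrak a B)^k=\mathfrak a^kB$ used to push the Fogarty inclusion from $F[V]^H$ up to $F[V]$.
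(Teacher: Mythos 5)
Your proof is correct and follows essentially the same route as the paper's: both reduce to Fogarty's Lemma \ref{fogarty} applied to $G/H$ acting on $A=F[V]^{H}$ with the stable ideal $J=F[V]^{H}_{+}$, then push the resulting inclusion $(F[V]^H_+)^{(G:H)}\subseteq F[V]^G_+F[V]^H$ up to $F[V]$. The only cosmetic difference is that you phrase the degree bookkeeping via powers of the irrelevant ideal $\mathfrak m$ where the paper speaks of monomials of degree $d(m+1)$.
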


\begin{proof}
Let $m$ denote the top degree of $F[V]_H$, and $d$ denote the index $(G:H)$. Then all monomials of
degree $m+1$ of $F[V]$ lie in $I:=F[V]^H_+\cdot F[V]$. Therefore all
monomials of degree $d(m+1)$ lie in
\[
I^{d}=(F[V]^H_+\cdot F[V])^{d}=(F[V]^H_+)^{d}\cdot F[V].
\]
By the previous lemma, applied to the group $(G/H)$ acting on $A=F[V]^{H}$ and the $G/H$-stable
ideal $J=F[V]^{H}_{+}$, we have \[(F[V]^H_+)^{d}\subseteq
(F[V]^{H}_{+})^{G/H}F[V]^{H}=F[V]^{G}_{+}F[V]^{H}\subseteq F[V]^{G}_{+}F[V].\] Therefore, all
monomials of degree $d(m+1)$ lie in $F[V]^{G}_{+}F[V]$.
\end{proof}

For abelian groups, the  top degree of the coinvariants has
another interpretation in terms of the \emph{Davenport constant}
of the group. We conclude with a discussion on this relation. For
the rest of this section assume that  $G$ is an abelian group with
$|G|\in F^*$. Since extending
 the ground field does not change the
top degree of coinvariants we  assume that $F$ is algebraically
closed. In this case the action is diagonalizable so we may as
well assume that  $F[V]=F[x_1, \dots , x_n]$, where $x_1, \dots ,
x_n$ is a basis of $V^*$ on which $G$ acts diagonally.
 For each
$1\le i\le n$, let $\kappa_i$ denote the character corresponding
to the action on $x_i$. Then a monomial $x_1^{a_1}\cdots
x_n^{a_n}$ is in $F[V]^G$ if $\sum_{1\le i\le n}a_i\kappa_i=0$.
Moreover, a monomial $x_1^{a_1}\cdots x_n^{a_n}$ is in the Hilbert ideal $I$
 if it is divisible by an invariant monomial,
that is there exist integers $0\le b_i\le a_i$ such that
$\sum_{1\le i\le n}b_i\kappa_i=0$. For an abelian group $G$, let
$S(G)$ denote the minimal integer such that every set of elements,
with repetitions allowed, of size $S(G)$ in $G$ has a subsequence
that sums up to zero. It also equals the length of the longest
non-shortenable zero sum of elements (with repetitions) of $G$.
This number is called the Davenport constant of $G$. Since the
character group of $G$ is isomorphic to $G$ it follows that every
monomial in $F[V]$ of degree $S(G)$ lies in $I$. This gives
$\topdeg (G)+1\le S(G)$. On the other hand by constructing an
action using the characters in the longest sequence of  elements
with no subsequence summing up to zero we get a $G$ module $V$
with $\topdeg F[V]_G+1=S(G)$. Similarly one can show that
$\beta(G)=S(G)$, see also \cite[Proposition 2.2]{Schmid}.  So it
follows that
 $$S(G)=\topdeg (G)+1=\beta(G).$$

Results on the Davenport constant  therefore apply to the top
degree of the coinvariants, and vice versa. See
\cite{GaoGeroldinger} for a survey on the Davenport constant.
Here we just quote two famous results due to Olson \cite{Olson1,
Olson2}: If $Z_{n}$ denotes the cyclic group of order $n$, then if
$a|b$, we have $S(Z_{a}\times Z_{b})=a+b-1$. If $p$ is a prime,
then $S(Z_{p^{d_{1}}}\times\dots\times
Z_{p^{d_{r}}})=1+\sum_{i=1}^{r}(p^{d_{i}}-1)$.

\section{The unboundedness of the top degree for modular coinvariants}\label{TopDegMod}

In this section we specialize to the modular case and show that,
in contrast to the non-modular case, the top degree of the
coinvariants of a given group  can become arbitrarily large. We
start with a collection of observations which despite their
simplicity give useful upper and lower bounds.

\begin{lemma}\label{subgroups}
Let $H$ be a subgroup of $G$ and $V$ be a $G$-module. Then
\[
\topdeg F[V]_{H}\le\topdeg F[V]_{G} \quad \text{ and }\quad \dim F[V]_{H}\le
\dim F[V]_{G}.
\]
\end{lemma}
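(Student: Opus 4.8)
The plan is to deduce everything from the single observation that $H\le G$ gives the inclusion of invariant rings $F[V]^{G}\subseteq F[V]^{H}$. First I would pass from this to the corresponding Hilbert ideals: since the Hilbert ideal is by definition the ideal of $F[V]$ generated by the positively graded part of the invariants, the containment $F[V]^{G}_{+}\subseteq F[V]^{H}_{+}$ immediately yields
\[
I_{G}:=F[V]^{G}_{+}F[V]\ \subseteq\ F[V]^{H}_{+}F[V]=:I_{H}.
\]
Because both $I_{G}$ and $I_{H}$ are homogeneous ideals, the identity map of $F[V]$ then descends to a surjective homomorphism of graded $F$-algebras $F[V]_{G}=F[V]/I_{G}\twoheadrightarrow F[V]/I_{H}=F[V]_{H}$.

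The two inequalities then follow degreewise. Since the displayed surjection is homogeneous, in each degree $d$ we get $\dim_{F}(F[V]_{H})_{d}\le\dim_{F}(F[V]_{G})_{d}$. Taking $d=\topdeg F[V]_{H}$, the left-hand side is nonzero, hence so is the right-hand side, which forces $\topdeg F[V]_{H}\le\topdeg F[V]_{G}$. Summing the inequality over all $d$ — a finite sum, as both coinvariant algebras are finite dimensional — gives $\dim F[V]_{H}\le\dim F[V]_{G}$. (Alternatively, one can invoke the graded Nakayama/module-generator description recalled in the introduction applied to both $F[V]^{G}\subseteq F[V]$ and $F[V]^{H}\subseteq F[V]$.)

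I do not expect a genuine obstacle here; the only point deserving a word of care is that the Hilbert ideal is the \emph{extended} ideal $F[V]^{\bullet}_{+}F[V]$ rather than, say, the ideal generated inside $F[V]^{\bullet}$, so that the inclusion of invariant rings really does propagate to an inclusion of Hilbert ideals in $F[V]$. Once that is noted, both statements are formal consequences of having a graded surjection $F[V]_{G}\twoheadrightarrow F[V]_{H}$.
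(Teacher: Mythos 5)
Your proposal is correct and is essentially the paper's own argument: the inclusion $F[V]^{G}_{+}\subseteq F[V]^{H}_{+}$ of Hilbert ideal generators yields a degree-preserving surjection $F[V]_{G}\twoheadrightarrow F[V]_{H}$, from which both inequalities follow at once. Your degreewise elaboration is just a more explicit spelling-out of the same step.
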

\begin{proof}
The inclusion $F[V]^{G}_{+}\subseteq F[V]^{H}_{+}$ induces a degree preserving
surjection
\[F[V]_{G}=F[V]/F[V]_{+}^{G}F[V] \twoheadrightarrow F[V]/F[V]_{+}^{H}F[V] =F[V]_{H},\]
which immediately establishes the claim.
\end{proof}

\begin{lemma}\label{submods}
Let  $U$ be a $G$-submodule of $V$. Then
\[
\topdeg F[U]_{G} \le \topdeg F[V]_{G}\quad\text{ and }\quad \dim F[U]_{G}\le
\dim F[V]_{G}.
\]
\end{lemma}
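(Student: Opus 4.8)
The plan is to imitate the proof of Lemma \ref{subgroups}: I will construct a degree-preserving, $G$-equivariant, surjective ring homomorphism $\pi\colon F[V]\twoheadrightarrow F[U]$ carrying the Hilbert ideal of $V$ into the Hilbert ideal of $U$, so that it descends to a degree-preserving surjection of graded vector spaces $F[V]_{G}\twoheadrightarrow F[U]_{G}$; both inequalities then follow at once from the mere existence of such a surjection.

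First, the $G$-equivariant inclusion $U\hookrightarrow V$ dualizes to a $G$-equivariant surjection $V^{*}\twoheadrightarrow U^{*}$ (surjectivity is plain linear algebra: every functional on $U$ extends to $V$), which extends to a surjective homomorphism of graded $F$-algebras $\pi\colon F[V]=S(V^{*})\twoheadrightarrow S(U^{*})=F[U]$. This $\pi$ is $G$-equivariant and maps the degree $d$ component $S^{d}(V^{*})$ onto $S^{d}(U^{*})$ for every $d$. In particular $\pi$ maps $F[V]^{G}$ into $F[U]^{G}$, hence maps $F[V]^{G}_{+}$ into $F[U]^{G}_{+}$. Note that one does \emph{not} claim that $\pi$ is surjective onto $F[U]^{G}$, which can fail in the modular case; only this weaker inclusion is needed.

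Second, since $\pi$ is a ring homomorphism,
\[
\pi\bigl(F[V]^{G}_{+}F[V]\bigr)=\pi\bigl(F[V]^{G}_{+}\bigr)\cdot\pi\bigl(F[V]\bigr)\subseteq F[U]^{G}_{+}\cdot F[U],
\]
so $\pi$ induces a degree-preserving homomorphism $F[V]_{G}=F[V]/F[V]^{G}_{+}F[V]\to F[U]/F[U]^{G}_{+}F[U]=F[U]_{G}$, which is surjective because $\pi$ is. A degree-preserving surjection $F[V]_{G}\twoheadrightarrow F[U]_{G}$ of graded $F$-vector spaces immediately yields $\topdeg F[U]_{G}\le\topdeg F[V]_{G}$ and $\dim F[U]_{G}\le\dim F[V]_{G}$, as claimed.

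There is no genuine obstacle here; the only subtlety worth flagging is the one mentioned above, namely that one must argue with the inclusion $\pi(F[V]^{G}_{+})\subseteq F[U]^{G}_{+}$ rather than an equality, since the restriction map on rings of invariants need not be surjective in the modular case — but the inclusion is all that is required to push the Hilbert ideal forward.
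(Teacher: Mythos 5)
Your proof is correct and follows essentially the same route as the paper: the restriction map $f\mapsto f|_{U}$ used there is precisely the algebra surjection induced by dualizing $U\hookrightarrow V$, and the paper likewise notes that its restriction to invariants is generally non-surjective but still carries the Hilbert ideal into the Hilbert ideal, giving the degree-preserving epimorphism of coinvariants.
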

\begin{proof}
The inclusion $U\subseteq V$ induces the epimorphism
\[
\varphi: F[V]\twoheadrightarrow F[U],\quad\quad f\mapsto f|_{U},
\]
which restricts to a (generally non-surjective) morphism $F[V]^{G}\rightarrow
F[U]^{G}$. We therefore get a degree preserving epimorphism
\[
\overline{\varphi}: \,\, F[V]_{G}=F[V]/F[V]_{+}^{G}F[V]\twoheadrightarrow F[U]/F[U]_{+}^{G}F[U]=F[U]_{G},
\]
which yields both inequalities.
\end{proof}

For a  $G$-module $V$, let $V^m$ denote the $m$-fold direct sum of $V$.
\begin{lemma}
For any two $G$-modules $V$ and $W$ we have,
\[
\topdeg F[V\oplus W]_{G}\le \topdeg F[V]_{G}+\topdeg F[W]_{G}.
\]
In particular, we have $\topdeg F[V^{m}]_{G}\le m \topdeg F[V]_{G}$ for all $m\in\BN$.
\end{lemma}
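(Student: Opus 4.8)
The plan is to reduce everything to the monomial characterization of the top degree recalled in the introduction (namely, $\topdeg F[V]_G$ is the smallest $d$ such that every monomial of $F[V]$ of degree $>d$ lies in the Hilbert ideal). First I would fix bases $x_1,\dots,x_n$ of $V^*$ and $y_1,\dots,y_k$ of $W^*$, identify $(V\oplus W)^*=V^*\oplus W^*$, and write $F[V\oplus W]=F[x_1,\dots,x_n,y_1,\dots,y_k]$ with $G$ acting diagonally. The key observation --- and essentially the only thing one has to check --- is that the inclusions $F[V]\hookrightarrow F[V\oplus W]$ and $F[W]\hookrightarrow F[V\oplus W]$ are $G$-equivariant, so that $F[V]^G_+$ and $F[W]^G_+$ both sit inside $F[V\oplus W]^G_+$; consequently each of $F[V]^G_+\,F[V\oplus W]$ and $F[W]^G_+\,F[V\oplus W]$ is contained in the Hilbert ideal $I:=F[V\oplus W]^G_+F[V\oplus W]$.

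Next, writing $d_V:=\topdeg F[V]_G$ and $d_W:=\topdeg F[W]_G$, I would take an arbitrary monomial $m=x^ay^b$ of $F[V\oplus W]$ with $\deg m=|a|+|b|>d_V+d_W$ and split into the two symmetric cases $|a|>d_V$ and $|b|>d_W$, one of which must hold by pigeonhole. In the case $|a|>d_V$, the monomial characterization of $\topdeg F[V]_G$ gives $x^a\in F[V]^G_+F[V]$, say $x^a=\sum_j f_jg_j$ with $f_j\in F[V]^G_+$ and $g_j\in F[V]$; multiplying by $y^b$ then writes $m=\sum_j f_j(g_jy^b)\in F[V]^G_+\,F[V\oplus W]\subseteq I$. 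The case $|b|>d_W$ is identical with the roles of $V$ and $W$ interchanged. Thus every monomial of degree exceeding $d_V+d_W$ lies in $I$, which is precisely the inequality $\topdeg F[V\oplus W]_G\le d_V+d_W$.

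Finally, for the ``in particular'' clause I would iterate the inequality just proved: $\topdeg F[V^m]_G=\topdeg F[V^{m-1}\oplus V]_G\le\topdeg F[V^{m-1}]_G+\topdeg F[V]_G$, and an easy induction on $m$ (base case $m=1$, or $m=0$ with $F[V^0]=F$) yields $\topdeg F[V^m]_G\le m\,\topdeg F[V]_G$. I do not anticipate any genuine obstacle in this argument; the only subtlety is making sure the $G$-action on $F[V\oplus W]$ restricts correctly to the subalgebras $F[V]$ and $F[W]$, which is immediate from the fact that the action on $(V\oplus W)^*$ is the direct sum of the actions on $V^*$ and $W^*$.
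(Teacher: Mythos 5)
Your proposal is correct and follows essentially the same route as the paper: factor a monomial of degree exceeding $\topdeg F[V]_{G}+\topdeg F[W]_{G}$ as a product of a monomial in $F[V]$ and one in $F[W]$, note by pigeonhole that one factor exceeds the corresponding top degree and hence lies in that factor's Hilbert ideal, and push this into the Hilbert ideal of $F[V\oplus W]$ via the inclusion $F[V]^{G}_{+}\subseteq F[V\oplus W]^{G}_{+}$. The iteration for $F[V^{m}]_{G}$ is likewise the paper's intended argument.
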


\begin{proof}
Assume that $M\in F[V\oplus W]$ is a monomial of degree at least $\topdeg F[V]_{G}+\topdeg
F[W]_{G}+1$. Write $M=M'M''$ with $M'\in F[V]$ and $M''\in
F[W]$.   Then we have either $\deg M'> \topdeg F[V]_{G}$ or $\deg M''> \topdeg
F[W]_{G}$. Without loss of generality we assume the former inequality. Then $M'\in
F[V]_{+}^{G}F[V]$, which implies $M\in F[V]_{+}^{G}F[V\oplus W]\subseteq F[V\oplus
W]^{G}_{+}F[V\oplus W]$.
\end{proof}

Let $V_{\reg}:=FG$ denote the regular representation of $G$. For
any $G$-module $V$, we have an embedding $V\hookrightarrow
V_{\reg}^{\dim_{F}(V)}$. Thus we get the following as  a corollary
to the preceding lemmas.

\begin{corollary}
For any $G$-module $V$, we have
\[
\topdeg F[V]_{G}\le \dim_{F}(V)\topdeg F[V_{\reg}]_{G}.
\]
\end{corollary}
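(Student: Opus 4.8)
The plan is to combine the embedding $V\hookrightarrow V_{\reg}^{\dim_F(V)}$ with the two monotonicity lemmas and the subadditivity lemma just proved. Writing $n:=\dim_F(V)$, the embedding exhibits $V$ as a $G$-submodule of $V_{\reg}^{n}$, so Lemma~\ref{submods} gives $\topdeg F[V]_{G}\le\topdeg F[V_{\reg}^{n}]_{G}$. Then the ``in particular'' clause of the preceding lemma, applied with $V_{\reg}$ in place of $V$, gives $\topdeg F[V_{\reg}^{n}]_{G}\le n\,\topdeg F[V_{\reg}]_{G}$. Chaining these two inequalities yields exactly $\topdeg F[V]_{G}\le\dim_F(V)\,\topdeg F[V_{\reg}]_{G}$.

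The only point that needs a word of justification is the embedding $V\hookrightarrow V_{\reg}^{n}$ itself. Here I would recall that the dual $V^{*}$ is generated as an $FG$-module by at most $n=\dim_F(V^*)$ elements, hence is a quotient of $(FG)^{n}=V_{\reg}^{n}$ (since $V_{\reg}$ is free of rank one, $V_{\reg}^{n}$ is free of rank $n$ and surjects onto any $n$-generated module); dualizing the surjection $V_{\reg}^{n}\twoheadrightarrow V^{*}$ and using $V_{\reg}^{*}\cong V_{\reg}$ produces an injection $V\cong V^{**}\hookrightarrow V_{\reg}^{n}$. Alternatively one can simply note that any faithful-or-not representation embeds into a sum of copies of the regular representation by the standard argument that $FG$ contains every irreducible of $F[V]$-... but for finite-dimensional $V$ it is cleanest to argue via the generation of $V^{*}$.

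There is essentially no obstacle: the statement is a formal consequence of the three lemmas, and the substance has already been extracted into them. If anything, the mild subtlety is making sure the monotonicity is applied in the right direction --- Lemma~\ref{submods} is about a submodule $U\subseteq V$ sitting inside the ambient module, and indeed here $U=V$ and the ambient module is $V_{\reg}^{n}$, so the inequality goes the correct way. I would therefore keep the proof to two sentences: establish the embedding, then quote Lemma~\ref{submods} and the subadditivity lemma in sequence.
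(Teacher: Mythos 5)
Your proof is correct and takes essentially the same route as the paper, which likewise obtains the corollary by combining the embedding $V\hookrightarrow V_{\reg}^{\dim_F(V)}$ with Lemma~\ref{submods} and the subadditivity estimate $\topdeg F[V_{\reg}^{m}]_{G}\le m\,\topdeg F[V_{\reg}]_{G}$. Your justification of the embedding (dualizing a surjection $(FG)^{n}\twoheadrightarrow V^{*}$ and using the self-duality of the regular representation) is a detail the paper leaves implicit, and it is sound.
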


In view of Theorem \ref{Noether}, the main result of this section
nicely separates the modular coinvariants from  the non-modular
ones.

\begin{theorem}\label{topdegCoinvariantsModular}
Let $V$ be a faithful $G$-module and assume that the characteristic $p>0$ of $F$ divides the group order
$|G|$. Then
\[
\lim_{m\to\infty} \topdeg F[V^{m}]_{G}=\infty.
\]
\end{theorem}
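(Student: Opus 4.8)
The plan is to reduce to the case $G=\Z/p$ and $V=V_2$, the two‑dimensional indecomposable module, and then exhibit explicit monomials of large degree outside the Hilbert ideal of $F[V_2^m]$. Since $V$ is faithful and $p\mid |G|$, $G$ contains an element $\sigma$ of order $p$; by Lemma \ref{subgroups}, $\topdeg F[V^m]_H\le\topdeg F[V^m]_G$ for $H=\langle\sigma\rangle$, so it suffices to treat $H\cong\Z/p$. Restricting the action of $\sigma$ to $V$ and using that $V$ is faithful, $\sigma$ acts non‑trivially, so its Jordan form over $F$ contains a block of size $\ge 2$; hence $V$ contains a two‑dimensional $H$‑submodule $U$ isomorphic to $V_2$ (a single Jordan block, on which $\sigma$ acts unipotently with one nontrivial off‑diagonal entry). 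By Lemma \ref{submods} applied to $U^m\subseteq V^m$, we get $\topdeg F[U^m]_H\le\topdeg F[V^m]_H$. So the whole problem comes down to showing $\lim_{m\to\infty}\topdeg F[V_2^m]_{\Z/p}=\infty$.

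For that last step I would work out the invariant theory of $\Z/p$ acting on $m$ copies of $V_2$ explicitly. Write $F[V_2^m]=F[x_1,y_1,\dots,x_m,y_m]$ with $\sigma(x_i)=x_i$, $\sigma(y_i)=y_i+x_i$. The invariant ring is classically understood: besides the $x_i$ and the "norms" $N_i=\prod_{j=0}^{p-1}(y_i+jx_i)$, the key invariants are the bilinear forms $u_{ij}=x_iy_j-x_jy_i$ and, when $m$ is large, certain invariants coming from the transfer. I would pick a target monomial, e.g. $M=(y_1y_2\cdots y_m)^{k}$ for a suitable $k$ growing with $m$, and argue that $M\notin F[V_2^m]^{\Z/p}_+F[V_2^m]$ as long as the degree is below some bound $\sim$ (linear in $m$). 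The cleanest way to see this is to localize: invert all the $x_i$, or better, pass to the quotient by the ideal $(x_1,\dots,x_m)$. Modulo $(x_1,\dots,x_m)$ every generator $x_i$ dies, $u_{ij}$ dies, $N_i$ becomes $y_i^p$, and the transfer of a monomial in the $y$'s becomes (up to a nonzero scalar) itself when the degree in each pair is a multiple of $p$ after the appropriate combination — one checks that the image of the Hilbert ideal in $F[y_1,\dots,y_m]/(x_i)=F[y_1,\dots,y_m]$ is contained in the ideal $(y_1^p,\dots,y_m^p)+$(span of transferred monomials), and then it remains to find a monomial in the $y_i$ of large degree not in that ideal. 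Richman's construction \cite{MR1423197} does exactly this on the invariant side, and the same combinatorics — choosing exponents $a_i<p$ with $\sum a_i$ not too large but arranged so that no nonempty "sub‑transfer" pattern fits inside — produces a monomial of degree growing linearly in $m$ that survives. Concretely one expects $\topdeg F[V_2^m]_{\Z/p}\ge m(p-1)/\,$something, certainly $\to\infty$.

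The main obstacle is the last point: controlling the image of the full Hilbert ideal (not just of the obvious generators) in the quotient $F[y_1,\dots,y_m]$, i.e. showing that no invariant of low degree, multiplied by an arbitrary polynomial, can produce the chosen monomial. This requires either a clean description of a generating set for $F[V_2^m]^{\Z/p}$ up to the relevant degree, or a dimension/Hilbert‑series count. I would lean on the known structure (the invariants are generated by the $x_i$, the $u_{ij}$, the $N_i$, and transfers $\Tr(y^{\boldsymbol a})$ of $y$‑monomials), so that modulo $(x_i)$ the ideal is visibly generated by $y_i^p$ and transferred $y$‑monomials, and then invoke the combinatorial lower bound from Richman's vector‑invariant argument. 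An alternative, fully self‑contained route — which I would mention as a remark — is to use the formula $\topdeg F[W]_G = \reg_0(F[W]^G)$‑type relations or the a‑invariant, but the direct monomial argument above is the most elementary and matches the paper's stated aim.
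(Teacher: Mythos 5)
Your reduction is exactly the paper's: pass to a subgroup $H=\langle\sigma\rangle$ of order $p$, use faithfulness to locate a Jordan block of size $\ge 2$ and hence a copy of $V_2$ inside $V$ as an $H$-module, and apply Lemmas \ref{subgroups} and \ref{submods} to reduce everything to $\lim_{m\to\infty}\topdeg F[V_2^m]_{\Z/p}=\infty$. The paper then finishes in one line by quoting the exact value $\topdeg F[V_2^m]_{\Z/p}=m(p-1)$ from \cite[Theorem 2.1]{MR2193198}. Your proposal does not finish: the entire content of the theorem now sits in the last step, and what you offer there is a plan, not a proof — as you yourself concede ("the main obstacle is the last point"). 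Concretely, the plan rests on the description of $F[V_2^m]^{\Z/p}$ as generated by the $x_i$, the norms $N_i$, the $u_{ij}$, and transfers of $y$-monomials. That is the Campbell--Hughes theorem (Richman's conjecture), a genuinely hard result that you neither prove nor precisely cite; without some control on a generating set of the invariant ring (or of the Hilbert ideal) in the relevant degrees, the argument that a chosen monomial avoids $F[V_2^m]^{\Z/p}_+F[V_2^m]$ cannot be completed.

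There is also a computational error in the sketch: with $\sigma(y_i)=y_i+x_i$ one has $\sigma^j(y_i)=y_i+jx_i$, so modulo $(x_1,\dots,x_m)$ the transfer of any $y$-monomial becomes the sum of $p$ identical terms, i.e.\ zero in characteristic $p$ --- not "itself up to a nonzero scalar." Ironically, the correct computation makes your strategy cleaner: granting the generation statement, the image of the Hilbert ideal in $F[y_1,\dots,y_m]$ lies in $(y_1^p,\dots,y_m^p)$, so the monomial $(y_1\cdots y_m)^{p-1}$ of degree $m(p-1)$ is not in the Hilbert ideal and one recovers the bound $\topdeg F[V_2^m]_{\Z/p}\ge m(p-1)$. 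But as written, the lower bound that drives the whole theorem is not established, so the proof is incomplete at its only essential point.
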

\begin{proof}
 Pick a subgroup $H$ of $G$  of size $p$. It is well
known that the indecomposable $H$-modules consist
of modules $V_{k}$ for $1\le k\le p$, where $V_k$ is a $k$-dimensional vector space on which a generator of $H$ acts via a single Jordan block
with ones on the diagonal.
 Therefore, as an $H$-module, $V$ decomposes in a direct sum
$V=\bigoplus_{i=1}^{q}V_{k_{i}}$. Note that $V$ is also faithful
as an $H$-module, so without loss of generality we  assume
$k_{1}\ge 2$. Notice that we have  an $H$-module inclusion
$V_{k}\subseteq V_{l}$  for any pair of integers $1\le k\le l\le
p$. In particular, we have the $H$-module inclusions
\[
V_{2}\subseteq V_{k_{1}} \subseteq \bigoplus_{i=1}^{q}V_{k_{i}}=V.
\]
Therefore for any $m\in\BN$ we have  $V_{2}^{m}\subseteq
V^{m}$ as $H$-modules. We now get 
\[
\topdeg F[V^{m}]_{G}\ge \topdeg F[V^{m}]_{H}
\]
by Lemma \ref{subgroups}, and furthermore
\[
\topdeg F[V^{m}]_{H}\ge \topdeg F[V_{2}^{m}]_{H}
\]
by Lemma \ref{submods}. On the other hand from  \cite[Theorem 2.1]{MR2193198}  we get $\topdeg
F[V_{2}^{m}]_{H}=m(p-1)$. So it follows that
\[
\topdeg F[V^{m}]_{G}\ge m(p-1) \text{ for all }m\in\BN.
\]
\end{proof}

We will show next that the dimensions of the vector coinvariants
always grow unboundedly as well, even in the non-modular case. We
start again with a simple but useful observation:

\begin{lemma}\label{dimBiggerTopDeg}
For any $G$-module $V$, we have
\[
\dim F[V]_{G}\ge \topdeg F[V]_{G}+1.
\]
\end{lemma}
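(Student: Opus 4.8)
The plan is to show that the coinvariant algebra $F[V]_G$ is nonzero in every degree from $0$ up to $d:=\topdeg F[V]_G$; since the graded pieces in these $d+1$ degrees are then nonzero $F$-vector spaces, summing their dimensions gives $\dim F[V]_G\ge d+1$. So the real content is the claim that the Hilbert function of $F[V]_G$ has no internal zeros, i.e.\ that $(F[V]_G)_i\neq 0$ for $0\le i\le d$.

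To prove this, I would argue that $F[V]_G$ is generated as an $F$-algebra by $(F[V]_G)_1=V^*/(V^*\cap I)$, where $I=F[V]^G_+F[V]$ is the Hilbert ideal: indeed $F[V]$ is generated in degree $1$ by $V^*$, and this property passes to the quotient $F[V]/I=F[V]_G$. Consequently the graded algebra $F[V]_G$ is a quotient of a polynomial ring on its degree-$1$ part, hence $(F[V]_G)_i=((F[V]_G)_1)^i$ for all $i$. Now suppose for contradiction that $(F[V]_G)_i=0$ for some $i\le d$. Then $((F[V]_G)_1)^i=0$, and therefore $((F[V]_G)_1)^j=0$ for every $j\ge i$ as well, so in particular $(F[V]_G)_d=((F[V]_G)_1)^d=0$. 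But $d$ is by definition the top degree, so $(F[V]_G)_d\neq 0$ — a contradiction. Hence $(F[V]_G)_i\neq 0$ for all $0\le i\le d$, and the inequality $\dim F[V]_G=\sum_{i\ge 0}\dim(F[V]_G)_i\ge d+1$ follows.

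The only point that needs a little care is the very first reduction: if $d=\topdeg F[V]_G$ is $0$ the statement is trivial ($\dim F[V]_G\ge 1$ since $1\neq 0$ in the coinvariants), and if $V^*\cap I=V^*$, i.e.\ $(F[V]_G)_1=0$, then already $(F[V]_G)_i=0$ for all $i\ge 1$, forcing $d=0$; so whenever $d\ge 1$ we automatically have $(F[V]_G)_1\neq 0$ and the multiplicative description of the higher graded pieces is meaningful. I expect the main obstacle — though it is really more of a bookkeeping point than a genuine difficulty — is making the "no internal zeros" step airtight: one must use that $F[V]_G$ is standard graded (generated in degree one), which is precisely what lets a vanishing in one degree propagate upward and collide with non-vanishing in the top degree.
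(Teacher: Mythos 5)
Your proof is correct, and it reaches the same intermediate goal as the paper's proof --- namely that $(F[V]_G)_i\neq 0$ for every $0\le i\le d$ --- but by a different mechanism. The paper argues concretely: since the degree-$d$ component of $F[V]_G$ is nonzero and the Hilbert ideal $I$ is homogeneous, there is a \emph{monomial} $m$ of degree $d$ with $m\notin I$; then no divisor of $m$ can lie in $I$ (else $m$ would, $I$ being an ideal), so each degree $\le d$ carries a nonzero class. You instead exploit the fact that $F[V]_G$ is standard graded, so that $(F[V]_G)_{i+1}=(F[V]_G)_1\cdot(F[V]_G)_i$ and a vanishing graded piece forces all higher pieces to vanish, contradicting $(F[V]_G)_d\neq 0$. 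Both arguments are short and complete; yours is slightly more abstract and applies verbatim to any standard graded quotient of $F[V]$ by a homogeneous ideal, while the paper's monomial-witness argument is more elementary and produces an explicit nonzero class in each degree (the divisors of $m$), which is occasionally useful in its own right. Your closing remarks about the degenerate cases $d=0$ and $(F[V]_G)_1=0$ are fine but not actually needed: the upward-propagation step already handles them uniformly.
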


\begin{proof}
If $d$ is the top degree of $F[V]_{G}$, then there exists a
monomial $m$ of degree $d$ which is not in the Hilbert ideal $I$.
Then every divisor of $m$ is also not contained in $I$, which
means that $F[V]_{G}$ contains a non-zero class in each degree
$\le d$. As elements of different degrees are linearly
independent, this finishes the proof.
\end{proof}

\begin{proposition}
For any non-trivial $G$-module $V$, we have
\[
\lim_{m\to\infty} \dim F[V^{m}]_{G}=\infty.
\]
\end{proposition}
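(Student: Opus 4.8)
The plan is to \emph{not} route the argument through the top degree. In the non-modular case Theorem~\ref{Noether} gives $\topdeg F[V^m]_G \le |G|-1$ for all $m$, so combining Lemma~\ref{dimBiggerTopDeg} with Theorem~\ref{topdegCoinvariantsModular} only settles the modular case. Instead I would exhibit the required growth inside a single fixed graded piece, namely degree one, and this works uniformly in every characteristic. So the first step is to pin down $(F[V^m]_G)_1$ explicitly.

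Since the Hilbert ideal $I := F[V^m]^G_+ F[V^m]$ is generated in positive degrees, a product $f\cdot g$ with $f\in F[V^m]^G_+$ and $g\in F[V^m]$ can land in degree one only when $\deg f = 1$ and $g$ is a scalar; hence $I_1 = F[V^m]^G_1 = \big((V^m)^*\big)^G$. Using the canonical $G$-module isomorphism $(V^m)^* \cong (V^*)^{\oplus m}$ together with the fact that taking $G$-fixed points commutes with finite direct sums, I get
\[
(F[V^m]_G)_1 \;=\; (V^m)^*\big/ I_1 \;\cong\; (V^*)^{\oplus m}\big/\big((V^*)^G\big)^{\oplus m} \;\cong\; \big(V^*/(V^*)^G\big)^{\oplus m}.
\]

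Now because $V$ is a non-trivial $G$-module, $G$ also acts non-trivially on $V^*$ (a group acts trivially on a module iff it acts trivially on its dual), so $(V^*)^G \subsetneq V^*$ and $\dim_F\!\big(V^*/(V^*)^G\big)\ge 1$. Therefore $\dim_F (F[V^m]_G)_1 \ge m$, and since $(F[V^m]_G)_0 = F$ and elements of different degrees are linearly independent, $\dim_F F[V^m]_G \ge 1 + m \to \infty$. I do not expect a real obstacle here; the only point worth flagging is exactly the one in the first paragraph — one must resist expecting the dimension growth to come from the top degree (bounded in the non-modular case) and instead harvest it from a fixed low degree. As an alternative phrasing, the same computation shows the surjection $F[V^m]_G \twoheadrightarrow F[V^{m-1}]_G$ supplied by Lemma~\ref{submods} fails to be injective already in degree one, so $m\mapsto \dim_F F[V^m]_G$ is strictly increasing; I would use whichever formulation sits more cleanly next to the preceding lemmas.
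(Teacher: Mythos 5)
Your argument is correct, and it takes a genuinely different (and more uniform) route than the paper. The paper splits into two cases: in the modular case it invokes Lemma~\ref{dimBiggerTopDeg} together with Theorem~\ref{topdegCoinvariantsModular}, and in the non-modular case it passes to a prime-order subgroup $H=\langle\sigma\rangle$, diagonalizes $\sigma$ on $V^*$, and observes that the $m$ copies of an eigenvector with non-trivial eigenvalue give $m$ independent classes in $F[V^m]_H$, whence $\dim F[V^m]_G\ge\dim F[V^m]_H\ge m$ by Lemma~\ref{subgroups}. Your computation
\[
(F[V^m]_G)_1=(V^m)^*/\bigl((V^m)^*\bigr)^G\cong\bigl(V^*/(V^*)^G\bigr)^{\oplus m}
\]
is exactly right — the Hilbert ideal meets degree one precisely in $(F[V^m]^G)_1$ since its generators have positive degree — and the observation that $V$ non-trivial forces $(V^*)^G\subsetneq V^*$ is sound (triviality on $V^*$ implies triviality on $V$ for finite-dimensional $V$). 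This yields $\dim F[V^m]_G\ge m+1$ in every characteristic at once, with no faithfulness reduction, no subgroup selection, and no diagonalization (which in the paper tacitly requires roots of unity in $F$ or a field extension). The paper's two-case argument is essentially your degree-one computation in disguise in the non-modular branch, while its modular branch buys the stronger fact that the growth can be read off from the top degree alone; your version buys uniformity, a cleaner explicit lower bound, and the strict monotonicity of $m\mapsto\dim F[V^m]_G$ via the surjections of Lemma~\ref{submods}. Either formulation is a valid proof.
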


\begin{proof}
We can assume the action of $G$ is faithful. In the modular case,
the result follows from Lemma \ref{dimBiggerTopDeg} and Theorem
\ref{topdegCoinvariantsModular}. In the non-modular case, choose a
subgroup $H=\langle \sigma\rangle$ of $G$ of prime order $q$,
which is  coprime to the characteristic of $F$. Choose a basis
$x_1, \dots , x_n$ of $V^*$  on which  $\sigma$ acts diagonally.
Since $V$ is a faithful $H$-module as well, we may assume that the
action of $\sigma$ on $x_1$ is given by  multiplication with a
primitive $q$th root of unity. Let $x_{1,1},\ldots,x_{1,m}$ denote
the copies of $x_{1}$ in $F[V^{m}]$. Then none of the linear
combinations of these variables lie in the Hilbert ideal
$F[V^{m}]^{H}_+F[V^{m}]$, so they form an independent set of
classes in $F[V^{m}]_{H}$. Therefore by Lemma \ref{subgroups} we
have\[ \dim F[V^{m}]_{G}\ge \dim F[V^{m}]_{H}\ge m,
\]
which clearly establishes the claim.
\end{proof}

\section{Top degree of vector coinvariants in the non-modular case}\label{topdegSymmetric}

In this section we study vector copies of an action of a group in
the non-modular case. Obtaining generating  invariants for these
actions is  generally a difficult problem nevertheless the degrees
of polynomials in minimal generating sets do not change in many
cases, see \cite[Example 3.10]{MR2414957} for a rare example. Our
computer aided search of  examples indicate that many classes of
coinvariants enjoy a similar type of saturation.
 We note this as a problem for future study.
\begin{problem}
\label{proble}
 Assume that $V$ is a non-modular $G$-module. Prove or disprove that
$$\topdeg F[V^{m}]_{G}=\topdeg F[V]_{G}$$ for any positive integer $m$. Find classes of groups and modules for which the equality is true.
\end{problem}

 We prove the equality  above for a certain special case. First, we  review the concept of polarization as we use polarized polynomials
in our computations. Let $V$ be a non-modular $G$-module and set
$A:=F[V]=F[x_{1},\ldots,x_{n}]$ and
$B:=F[V^{m}]=F[x_{1,1},\ldots,x_{n,1},\ldots,x_{1,m},\ldots,x_{n,m}]$.
We use the lexicographic order on $B$ such that
\[
x_{1,1}> x_{1,2} >\ldots >x_{1,m}> \ldots > x_{n,1}>\ldots > x_{n,m}
\]
and the order on $A$ is obtained by setting $m=1$. For an ideal $I$ in $A$ or
$B$ we denote the lead term ideal of $I$ with $L(I)$. Also $L(f)$
 denotes the lead term of  a polynomial $f$
in these rings. We introduce additional variables  $t_{1},\ldots,t_{m}$ and  define an algebra homomorphism
\[
\phi: A\rightarrow B[t_{1},\ldots,t_{m}],\quad x_{i}\mapsto
x_{i,1}t_{1}+\ldots+x_{i,m}t_{m}.
\] Then for any $f\in A$,
 write
\[
\phi(f)=\sum_{i_{1},\ldots,i_{m}}f_{i_{1},\ldots,i_{m}}t_{1}^{i_{1}}\dots
t_{m}^{i_{m}},
\]
where $f_{i_{1},\ldots,i_{m}}\in B$.
This process is called polarization and we let  $\Pol (f)$ denote the set of coefficients $\phi_{i_{1},\ldots,i_{m}}(f):=f_{i_{1},\ldots,i_{m}}$ of $\phi(f)$. Restricting
 to invariants, it is well known that we get a map
 $\Pol: A^{G}\rightarrow \Potm(B^{G})$, where $\Potm (B^{G})$ denotes the power set of $B^{G}$.
Let $I_{A}:=A^{G}_{+}A$ denote the Hilbert ideal of
$A$, and similarly $I_{B}$ denote the Hilbert ideal of $B$. We show that polarization of a polynomial in $I_A$ gives polynomials in $I_B$.

\begin{lemma}
Let $f\in I_A$.
Then $\Pol (f)\in \Potm (I_B)$.
\end{lemma}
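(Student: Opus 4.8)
The plan is to reduce the statement to two facts recalled just above: that $\phi\colon A\to B[t_{1},\ldots,t_{m}]$ is an algebra homomorphism, and that $\Pol$ maps $A^{G}$ into $\Potm(B^{G})$. I would start by writing $f$ as a finite sum $f=\sum_{j}h_{j}a_{j}$ with $h_{j}\in A^{G}_{+}$ homogeneous invariants of positive degree and $a_{j}\in A$. Since $\phi$ is a ring homomorphism, $\phi(f)=\sum_{j}\phi(h_{j})\,\phi(a_{j})$ in $B[t_{1},\ldots,t_{m}]$.

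The next point is a grading observation. Assign degree $1$ to every $x_{i,k}$ and degree $0$ to every $t_{k}$; then $\phi(x_{i})=x_{i,1}t_{1}+\ldots+x_{i,m}t_{m}$ is homogeneous of degree $1$, so $\phi$ is degree preserving. Hence if $g\in A$ is homogeneous of degree $d$, every coefficient $\phi_{i_{1},\ldots,i_{m}}(g)\in B$ is homogeneous of degree $d$. Applying this to the $h_{j}$ (which have positive degree) together with $\Pol(h_{j})\subseteq B^{G}$, we conclude that every $B$-coefficient occurring in $\phi(h_{j})$ lies in $B^{G}_{+}$.

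Now I would expand the products and collect terms. Comparing the coefficient of a fixed monomial $t_{1}^{i_{1}}\cdots t_{m}^{i_{m}}$ on both sides of $\phi(f)=\sum_{j}\phi(h_{j})\phi(a_{j})$ expresses $f_{i_{1},\ldots,i_{m}}$ as a finite sum of products $\phi_{\alpha}(h_{j})\,\phi_{\beta}(a_{j})$ with $\alpha+\beta=(i_{1},\ldots,i_{m})$. Since each $\phi_{\alpha}(h_{j})\in B^{G}_{+}$ and each $\phi_{\beta}(a_{j})\in B$, every $f_{i_{1},\ldots,i_{m}}$ lies in $B^{G}_{+}B=I_{B}$. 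Therefore $\Pol(f)\subseteq\Potm(I_{B})$, as claimed.

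I do not anticipate a genuine obstacle here: the only subtlety is that $\Pol$ is not itself a ring map, so one must carry out the computation through $\phi$ (which is) and then read off coefficients, and one has to pin down the grading that makes $\phi$ degree preserving so that the property ``invariant of positive degree'' is inherited by the polarized coefficients of the $h_{j}$.
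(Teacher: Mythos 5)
Your proof is correct and follows essentially the same route as the paper's: reduce to products of a positive-degree invariant with an arbitrary element (the paper uses linearity of the coefficient maps to take $f=hg$, you keep the finite sum, which is only a cosmetic difference), use multiplicativity of $\phi$ to express each $f_{i_{1},\ldots,i_{m}}$ as a sum of products $\phi_{\alpha}(h_{j})\phi_{\beta}(a_{j})$, and observe that the polarized coefficients of the $h_{j}$ lie in $B^{G}_{+}$ because polarization preserves degrees and sends invariants to invariants. Your explicit spelling-out of the grading that makes $\phi$ degree preserving is a slightly more detailed version of the paper's one-line remark to the same effect.
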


\begin{proof}
Since each $\phi_{i_{1},\ldots,i_{m}}$ is a linear map, we may take $f=hg$ with
$h\in A^G_+$  and $g\in A$. Write
$\phi (h)=\sum_{j_{1},\ldots,j_{m}}h_{j_{1},\ldots,j_{m}}t_{1}^{j_{1}}\dots
t_{m}^{j_{m}}$ and  $\phi (g)=\sum_{q_{1},\ldots,q_{m}}g_{q_{1},\ldots,q_{m}}t_{1}^{q_{1}}\dots
t_{m}^{q_{m}}$. Note that  we have $h_{j_{1},\ldots,j_{m}}\in B^G_+$ since polarization preserves degrees. It follows that
$$f_{i_{1},\ldots,i_{m}}=\sum_{j_k+q_k=i_k, \; 1\le k\le
  m}h_{j_{1},\ldots,j_{m}}g_{q_{1},\ldots,q_{m}}\in B^{G}_{+}B,$$
which proves the lemma.
\end{proof}

We now identify a situation where the  equality in Problem
\ref{proble} holds.

\begin{theorem}
Let $F$ be a field of characteristic $p$ and $V$ a $G$-module. Assume that
there exist integers $a_1, \dots ,a_n$, strictly smaller than $p$  in case of positive
characteristic, such that
$L(I_A)=(x_1^{a_1}, \dots ,x_n^{a_n})$.   Then
we have
\[
\topdeg F[V^{m}]_G=\topdeg F[V]_G=\sum_{i=1}^{n}(a_{i}-1) \quad\quad\text{ for all }m\in\BN.
\]
\end{theorem}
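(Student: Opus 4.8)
The plan is to reduce everything to a statement about monomials and lead term ideals, exploiting the hypothesis that $L(I_A)$ is generated by pure powers $x_i^{a_i}$ with $a_i<p$ (in positive characteristic). First I would establish the ``$\le$'' direction and the value of $\topdeg F[V]_G$ itself: since $F[V]_G = A/I_A$ has the same Hilbert function as $A/L(I_A)$, and $A/(x_1^{a_1},\dots,x_n^{a_n})$ has top degree $\sum (a_i-1)$ (its standard monomial basis consists of $x_1^{b_1}\cdots x_n^{b_n}$ with $b_i\le a_i-1$), we get $\topdeg F[V]_G = \sum(a_i-1)$. For the vector case I would show $L(I_B)\supseteq (x_{i,j}^{a_i}: 1\le i\le n,\ 1\le j\le m)$, which would immediately give $\topdeg F[V^m]_G \le \sum_{i,j}(a_i-1)$\,---\,wait, that's too big; the correct target is that $\topdeg F[V^m]_G$ equals $\sum(a_i-1)$, \emph{constant in $m$}. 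So the ``$\ge$'' direction is the easy one (via Lemma \ref{submods} or by exhibiting a surviving monomial of degree $\sum(a_i-1)$ using just the first copy of the variables), and the real content is the upper bound $\topdeg F[V^m]_G \le \sum(a_i-1)$.

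For that upper bound the key step is to show that $L(I_B)$ contains every monomial $x_{i,1}^{c_{i,1}}\cdots x_{i,m}^{c_{i,m}}$ in the variables $x_{i,1},\dots,x_{i,m}$ (a single ``row'' $i$) of total degree $\ge a_i$. This is where polarization enters: take the invariant $h\in A^G_+$ whose lead monomial is $x_i^{a_i}$ (such an $h$ exists since $x_i^{a_i}\in L(I_A)$, and one can arrange $\LM(h)=x_i^{a_i}$), and consider its polarizations $\phi_{j_1,\dots,j_m}(h)\in B^G$. By the preceding Lemma these lie in $I_B$. One then has to compute the lead terms of these polarized invariants: intuitively, polarizing $x_i^{a_i}$ (the lead monomial of $h$) produces, among the coefficients of $t_1^{j_1}\cdots t_m^{j_m}$ with $\sum j_k = a_i$, the terms $\binom{a_i}{j_1,\dots,j_m}x_{i,1}^{j_1}\cdots x_{i,m}^{j_m}$, and here the hypothesis $a_i<p$ guarantees the multinomial coefficients are nonzero in $F$. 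One needs to check that this monomial is actually the lead term of $\phi_{j_1,\dots,j_m}(h)$ with respect to the lex order on $B$ described in the section\,---\,this should follow because the chosen term order on $B$ ``restricts'' compatibly to the order on $A$ (specializing $x_{i,j}\mapsto x_i$), so lead terms are preserved under polarization. Hence all $x_{i,1}^{j_1}\cdots x_{i,m}^{j_m}$ with $\sum_k j_k = a_i$ lie in $L(I_B)$, and therefore so does every monomial in row $i$ of degree $\ge a_i$.

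Granting that, the upper bound is a short combinatorial argument. Let $M = \prod_{i,j} x_{i,j}^{c_{i,j}}$ be a monomial of degree $> \sum_{i=1}^n(a_i-1)$. Its degree splits as $\deg M = \sum_{i=1}^n d_i$ where $d_i := \sum_{j=1}^m c_{i,j}$ is the degree of the row-$i$ part. If $d_i \le a_i-1$ for every $i$ then $\deg M \le \sum (a_i-1)$, a contradiction; so some $d_i \ge a_i$, and by the previous paragraph the row-$i$ part of $M$ lies in $L(I_B)$, hence $M\in L(I_B)$, hence $M\in I_B$ (standard: a monomial in $L(I)$ is the lead term of some element of $I$, and by subtracting we stay in $I$; more simply, the standard monomials form a basis of $B/I_B$ so any monomial in $L(I_B)$ is zero in $B/I_B$). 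Combined with $\topdeg F[V^m]_G \ge \topdeg F[V]_G = \sum(a_i-1)$ from Lemma \ref{submods} applied to $V\subseteq V^m$, this gives the claimed equality.

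The main obstacle I anticipate is the lead-term computation for the polarized invariants: one must verify rigorously that $\LM_B(\phi_{j_1,\dots,j_m}(h)) = x_{i,1}^{j_1}\cdots x_{i,m}^{j_m}$ when $\LM_A(h)=x_i^{a_i}$ and $\sum_k j_k = a_i$. The plausible mechanism is that the lex order on $B$ was \emph{designed} (the ordering $x_{1,1}>\dots>x_{1,m}>\dots>x_{n,1}>\dots>x_{n,m}$) so that for each monomial $u$ of $A$, among all polarization components of $u$ with a fixed multi-index $(j_1,\dots,j_m)$, the dominant one under lex is the ``ordered'' one obtained by substituting the $t$'s greedily\,---\,and that if $u > u'$ in $A$'s order then every polarization component of $u$ at multidegree $(j_1,\dots,j_m)$ dominates every component of $u'$ at that same multidegree. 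Making this precise, and checking it interacts correctly with the cancellation-free appearance of the multinomial coefficient (using $a_i < p$), is the crux; everything else is bookkeeping with Hilbert functions and a counting argument on row-degrees.
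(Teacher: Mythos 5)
Your proposal is essentially the paper's proof: the value $\sum_{i}(a_i-1)$ for $m=1$ read off from the standard monomials of $L(I_A)$, the lower bound from Lemma \ref{submods}, and the upper bound by showing that $L(I_B)$ contains all row-$i$ monomials of total degree $a_i$ via polarization, with the hypothesis $a_i<p$ used exactly to keep the multinomial coefficient nonzero. Two corrections are needed. First, you cannot take an \emph{invariant} $h\in A^G_+$ with $\LM(h)=x_i^{a_i}$: the membership $x_i^{a_i}\in L(I_A)$ only produces an element of the Hilbert ideal $I_A=A^G_+A$ with that lead monomial, and the stronger claim is already false for the motivating example $S_n$ (no symmetric polynomial has lead monomial $x_2^2$, since an invariant containing $x_2^2$ also contains the lex-larger orbit member $x_1^2$). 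This is harmless: the polarization lemma you invoke is stated for $f\in I_A$, nothing downstream uses invariance of $h$, and the paper indeed takes a homogeneous $f\in I_A$ with $L(f)=x_i^{a_i}$. Second, the lead-term computation you flag as the crux is closed in the paper as follows: since $f$ is homogeneous of degree $a_i$ with lex-lead term $x_i^{a_i}$, every other term of $f$ has the form $x_i^{b_i}x_{i+1}^{b_{i+1}}\cdots x_n^{b_n}$ with $b_i<a_i$; any monomial occurring in the $t_1^{a_{i,1}}\cdots t_m^{a_{i,m}}$-coefficient of the polarization of such a term has, for each $k$, its column-$k$ exponents summing to $a_{i,k}$ over the rows $i,\dots,n$, hence $x_{i,k}$-exponent at most $a_{i,k}$, with strict inequality for some $k$ because the row-$i$ total is $b_i<a_i$; the chosen lex order then puts it strictly below $x_{i,1}^{a_{i,1}}\cdots x_{i,m}^{a_{i,m}}$. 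With that verified, your final row-degree counting argument (which the paper leaves implicit) correctly finishes the upper bound.
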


\begin{proof}
Since the monomials in $A$ that do not lie in $L(I_A)$ form a vector space basis for $F[V]_G$, we have
$\topdeg F[V]_G=\sum_{i=1}^{n}(a_i-1)$. From Lemma \ref{submods} we also have
$\topdeg
F[V]_{G}\le \topdeg F[V^{m}]_{G}$. Therefore, to prove the  theorem it suffices to show
  $\topdeg F[V^{m}]_{G}\le \sum_{i=1}^{n}(a_i-1)$. To this end we demonstrate that
the lead term ideal
 $L(I_{B})$ contains  the set
\[
S:=\{x_{i,1}^{a_{i,1}}x_{i,2}^{a_{i,2}}\cdot\ldots\cdot x_{i,m}^{a_{i,m}}|\quad
i=1,\ldots,n,\,\,\,\,\, a_{i,1}+\ldots+a_{i,m}=a_i\}.
\]
Take a homogeneous element $f\in I_{A}$ with  $L(f)=x_{i}^{a_i}$. So $f=x_{i}^{a_{i}}+h$ where each term in $h$ is strictly
lex-smaller than $x_{i}^{a_i}$. Then each term of $h$ is of
the form $x_{i}^{b_{i}}x_{i+1}^{b_{i+1}}\dots x_{n}^{b_{n}}$ with
$b_{i}<a_i$. Considering
\[
\phi(x_{i}^{a_i})=(t_{1}x_{i,1}+\ldots+t_{m}x_{i,m})^{a_i}
\]
and
\[
\phi(x_{i}^{b_{i}}x_{i+1}^{b_{i+1}}\dots x_{n}^{b_{n}})=   (t_{1}x_{i,1}+\ldots+t_{m}x_{i,m})^{b_{i}}\dots (t_{1}x_{n,1}+\ldots+t_{m}x_{n,m})^{b_{n}},
\]
we get by the choice of our order that, for any sequence $a_{i,1},\ldots,a_{i,m}\in\BN_{0}$  satisfying
$a_{i,1}+\ldots+a_{i,m}=a_i$ we have
\[
L(\phi_{a_{i,1},\ldots,a_{i,m}}(f))=L(\phi_{a_{i,1},\ldots,a_{i,m}}(x_{i}^{a_i}))=\frac{{a_i}!}{a_{i,1}!\dots
a_{i,m}!}x_{i,1}^{a_{i,1}}x_{i,2}^{a_{i,2}}\cdot\ldots\cdot
x_{i,m}^{a_{i,m}}.
\]
As for positive characteristic $p$, $a_{i}$ is strictly smaller
than $p$ by hypothesis, so the coefficient is nonzero. Moreover,
$\phi_{a_{i,1},\ldots,a_{i,m}}(f)\in I_B$ by the
 previous lemma. This finishes the proof.
\end{proof}

 Consider the natural action of the symmetric group $S_n$ on $F[V]$. It is well known that
$L(I_A)=(x_1, x_2^2, \dots , x_n^n )$, see for example  \cite[Proposition 1.1]{MR2246711}.
So the theorem applies and we get the following corollary, which also appears
as the special case $q=1$ in \cite[Lemma 3.1]{DomokosVector}.

\begin{corollary}\label{DomokosLemma}
 Let $F$ be a field of characteristic $p$ and $V$ be the natural
 $S_n$-module. If $p=0$ or $p>n$, then
for any positive integer $m$ we have $$\topdeg F[V^{m}]_{S_n}={n \choose 2}.$$
 \end{corollary}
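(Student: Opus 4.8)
The plan is to derive this directly from the theorem just established. Take $V$ to be the natural (permutation) $S_n$-module, so that $A=F[V]=F[x_1,\dots,x_n]$ and the invariant ring is the polynomial ring $F[e_1,\dots,e_n]$ generated by the elementary symmetric polynomials; hence $I_A=(e_1,\dots,e_n)$. The first step is to invoke the well-known description of the lead term ideal recorded just above: with respect to the lexicographic order with $x_1>\dots>x_n$, one has $L(I_A)=(x_1,x_2^2,\dots,x_n^n)$ (see \cite[Proposition 1.1]{MR2246711}). In other words $L(I_A)$ is generated by the pure powers $x_i^{a_i}$ with $a_i=i$ for $1\le i\le n$.

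The second step is to check that the hypotheses of the theorem are met. If $p=0$ there is nothing to verify. If $p>0$, the theorem requires $a_i<p$ for all $i$, i.e. $\max_i a_i=n<p$, which is precisely the standing assumption $p>n$. Therefore the theorem applies verbatim and yields, for every $m\in\BN$,
\[
\topdeg F[V^m]_{S_n}=\sum_{i=1}^n(a_i-1)=\sum_{i=1}^n(i-1)=\binom{n}{2},
\]
which is the assertion.

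I do not expect any genuine obstacle here: the substantive content --- the polarization argument showing $L(I_B)\supseteq S$ --- lives in the proof of the theorem, and the corollary only requires plugging in the data $a_i=i$ and matching the characteristic condition. The single point worth a sentence of commentary is why $p>n$ is the natural hypothesis, rather than some weaker non-modularity condition: it is exactly what simultaneously guarantees the stated form of $L(I_A)$ and the non-vanishing, in characteristic $p$, of the multinomial coefficients $a_i!/(a_{i,1}!\cdots a_{i,m}!)$ that appear in the theorem's proof.
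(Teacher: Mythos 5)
Your proposal is correct and follows exactly the paper's route: invoke $L(I_A)=(x_1,x_2^2,\dots,x_n^n)$ from \cite[Proposition 1.1]{MR2246711}, note that $a_i=i\le n<p$ satisfies the characteristic hypothesis, and apply the preceding theorem to get $\sum_{i=1}^n(i-1)=\binom{n}{2}$. Your added remark on why $p>n$ is the natural hypothesis is a nice touch but not needed.
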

We want to emphasize here again the sharp contrast to the case $0< p\le n$, where by
Theorem \ref{topdegCoinvariantsModular} we have $\lim_{m\to\infty} \topdeg F[V^{m}]_{S_{n}}=\infty$.

\section{A new proof for Steinberg's Theorem}\label{Steinberg}

The following might be one of the  most celebrated results on coinvariants.

\begin{theorem}[Steinberg]
For any faithful $G$-module $V$, we have
\[
|G|\le \dim F[V]_{G}
\]
with equality  if and only if $F[V]^{G}$ is  polynomial.
\end{theorem}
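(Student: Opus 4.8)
The plan is to prove the two parts of Steinberg's theorem separately, relying on the standard module-theoretic interpretation of $\dim F[V]_G$ recalled in the introduction: if $g_1,\dots,g_t$ is a minimal homogeneous generating set of $F[V]$ as an $F[V]^G$-module, then $t=\dim F[V]_G$, and $F[V]=\sum_i F[V]^G g_i$. The field $F(V)$ of rational functions is a Galois extension of $F(V)^G$ of degree $|G|$ (since $G$ acts faithfully), so $F(V)$ is spanned over $F(V)^G$ by any basis, and in particular by the $g_i$. This already gives $|G|\le t=\dim F[V]_G$, because the $g_i$ generate $F(V)$ over $F(V)^G$, hence contain a spanning set of size $\ge |G|$; conversely if $F[V]^G$ is polynomial then $F[V]$ is a free $F[V]^G$-module (graded, Cohen--Macaulay over a polynomial ring), and comparing Hilbert series or quotient fields shows the rank is exactly $|G|$, giving equality.

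For the inequality I would argue as follows. Let $f_1,\dots,f_n$ be a homogeneous system of parameters contained in $F[V]^G$ (these exist by Noether normalization applied to the finite extension $F[V]^G\supseteq$ some polynomial subring). Then $F[V]$ is a finite module over $R:=F[f_1,\dots,f_n]$, in fact a free one since $F[V]$ is Cohen--Macaulay and $R$ is a polynomial ring (graded Auslander--Buchsbaum, or simply: a Cohen--Macaulay module over a regular ring on which a regular sequence acts is free). Tensoring the free decomposition $F[V]\cong R^N$ with the fraction field of $R$ and comparing with the tower $F(V)\supseteq F(V)^G\supseteq \mathrm{Frac}(R)$ gives $N=[F(V):\mathrm{Frac}(R)]=|G|\cdot[F(V)^G:\mathrm{Frac}(R)]\ge |G|$. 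Since $N$ equals the number of module generators of $F[V]$ over $R$, and any generating set over $R$ yields a generating set over the larger ring $F[V]^G$ of no greater size, we conclude $\dim F[V]_G\le$... wait --- the inequality goes the other way: a minimal generating set over $F[V]^G$ has size $\le N$, so I instead want the reverse. The cleaner route: $\dim F[V]_G = \dim_F F[V]\otimes_{F[V]^G} F$, and tensoring instead with $\mathrm{Frac}(F[V]^G)=F(V)^G$ gives $\dim F[V]_G\ge \dim_{F(V)^G}(F(V)^G\otimes_{F[V]^G}F[V])\ge \dim_{F(V)^G}F(V)=|G|$, using right-exactness of tensor product and that localization can only shrink the dimension of the special fiber. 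This is the honest argument for the lower bound.

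For the equality case, one direction is classical: if $F[V]^G=F[f_1,\dots,f_n]$ is polynomial, then $F[V]$ is free over it of rank $\dim F[V]_G$, and the rank computation above forces this rank to be exactly $|G|$. For the converse, suppose $\dim F[V]_G=|G|$. Pick a homogeneous system of parameters $f_1,\dots,f_n$ for $F[V]^G$; then $F[V]$ is free over $R=F[f_1,\dots,f_n]$ of rank $|G|\cdot[F(V)^G:\mathrm{Frac}(R)]$, and also $F[V]^G$ is free over $R$ of rank $[F(V)^G:\mathrm{Frac}(R)]=:r$. Now $\dim F[V]_G$ computed via $R$ rather than via $F[V]^G$ satisfies $\dim_F F[V]/RF[V]_+\cdot F[V] = |G|\cdot r$, and the general inequality relating the two special fibers, together with freeness of $F[V]^G$ over $R$, gives $\dim F[V]/F[V]^G_+F[V]\cdot(\text{something})$; the point is that $|G|=\dim F[V]_G$ forces $r=1$, i.e. $F(V)^G=\mathrm{Frac}(R)$, so $F[V]^G$ is a finite birational extension of the polynomial ring $R$. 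Since $F[V]^G$ is integrally closed (invariant rings are) and $R$ is normal and they have the same fraction field, actually one needs $F[V]^G$ to be free of rank one over $R$, forcing $F[V]^G=R$. I expect the main obstacle to be exactly this last step: carefully extracting from the numerical equality $\dim F[V]_G=|G|$ the conclusion that the invariant ring is free of rank one over a polynomial subring --- this requires keeping precise track of the two Hilbert-series/rank computations (for $F[V]$ over $R$ and for $F[V]^G$ over $R$) and using that $F[V]$ is Cohen--Macaulay while being agnostic about whether $F[V]^G$ is. The degree and faithfulness hypotheses enter only through the clean identity $[F(V):F(V)^G]=|G|$, which is where $G$ acting faithfully is essential.
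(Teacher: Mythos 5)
Your proof of the inequality $|G|\le\dim F[V]_G$ (the ``honest argument'' you settle on: a minimal generating set of $F[V]$ over $F[V]^G$ has size $\dim F[V]_G$ by graded Nakayama, and localizing at $F[V]^G\setminus\{0\}$ turns it into a spanning set of $F(V)$ over $F(V)^G$, a space of dimension $|G|$ by Galois theory and faithfulness) is correct and is exactly the paper's argument; the paper isolates the needed fact that $(F[V]^G\setminus\{0\})^{-1}F[V]=F(V)$ as a separate proposition about integral extensions. The direction ``$F[V]^G$ polynomial $\Rightarrow$ equality'' is also fine and matches the paper (freeness from Cohen--Macaulayness of $F[V]$, rank $|G|$ from the fraction-field computation).

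The genuine gap is in the converse, ``$\dim F[V]_G=|G|\Rightarrow F[V]^G$ polynomial,'' and it is not just a matter of bookkeeping. Your plan is to pick a homogeneous system of parameters $R=F[f_1,\dots,f_n]\subseteq F[V]^G$, set $r=[\Quot(F[V]^G):\Quot(R)]$, and extract $r=1$ from the numerical hypothesis. This cannot work for an arbitrary h.s.o.p.: the only inequality available between the two special fibers is $\dim F[V]_G=\dim F[V]/F[V]^G_+F[V]\le\dim F[V]/R_+F[V]=|G|\,r$ (since $R_+F[V]\subseteq F[V]^G_+F[V]$), which gives $r\ge 1$ and nothing more. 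Concretely, for $S_2$ acting on $F[x,y]$ one has $\dim F[V]_G=2=|G|$, yet the h.s.o.p.\ $\{(x+y)^2,\,xy\}$ has $r=2$; so $r=1$ is simply not a consequence of the equality. Moreover, your intermediate claim that $F[V]^G$ is free of rank $r$ over $R$ presupposes that $F[V]^G$ is Cohen--Macaulay, which is exactly what you said you would remain agnostic about and which fails for general modular invariant rings. The step you are missing is the one the paper uses: when $\dim F[V]_G=|G|$, the minimal generating set $S$ is a spanning set of $F(V)$ over $F(V)^G$ of cardinality equal to $\dim_{F(V)^G}F(V)$, hence a \emph{basis}, hence linearly independent over $F(V)^G$ and a fortiori over $F[V]^G$; therefore $F[V]$ is a \emph{free} $F[V]^G$-module. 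One then invokes the (nontrivial, but standard) theorem that freeness of $F[V]$ over $F[V]^G$ forces $F[V]^G$ to be polynomial (\cite[Corollary 6.2.3]{MR1249931}). Without that freeness observation, or an equivalent substitute, your converse does not close.
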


Note that by the famous Chevalley-Shephard-Todd-Serre-Theorem,
$F[V]^{G}$ being polynomial always implies $G$ is a reflection
group, and in the non-modular case  the converse is also true.
Steinberg \cite{MR0167535} proves the  theorem above for the
complex numbers using analysis. More recently, Smith
\cite{MR1972694} generalized the theorem to arbitrary fields,
using  some heavy machinery from homological algebra. We now give
an almost elementary proof.

\begin{proof}
The group $G$ acts naturally on the quotient field $F(V)$, hence by Galois
theory we have $\dim_{F(V)^{G}}F(V)=|G|$.  Let $S$ be a minimal generating set for $F[V]$ as a module over $F[V]^G$.
Then by the graded Nakayama lemma, $S$ projects injectively onto a vector space basis for $F[V]_G$. Moreover, from Proposition  \ref{son} we get that $S$
also generates $F(V)$ as an $F(V)^{G}$-vector
space. So we have
 \[
\dim F[V]_{G}=|S|\ge\dim_{F(G)^{G}}F(V)=|G|.
\]

If equality holds, then $S$ is a basis for $F(V)$ over $F(V)^{G}$, so it is
$F(V)^{G}$- and hence $F[V]^{G}$-linearly independent. This implies that $F[V]$
 is a free $F[V]^{G}$-module. Now by \cite[Corollary 6.2.3]{MR1249931}, we
 get that $F[V]^{G}$ is  polynomial. The reverse implication is
 straightforward: If $F[V]^{G}$ is polynomially generated by invariants of
 degree $d_{1},\ldots,d_{n}$, the Cohen-Macaulayness of $F[V]$ implies that
 $F[V]$ is freely generated over $F[V]^{G}$ by $d_{1}\cdot\ldots \cdot d_{n}$ many
 generators, and it is well known that this product equals $|G|$, see Smith's
 proof \cite{MR1972694}
for the details.
\end{proof}

Above we used the following well-known proposition. We give a
proof here due to lack of reference. Let $\Quot (D)$ denote the
quotient field of an integral domain~$D$.

\begin{proposition}
\label{son}
Assume $A\subseteq R$ is an integral extension of integral domains. Then
\[
\Quot(R)=\left\{\frac{r}{a}|\quad r\in R,\,\,a\in A\setminus\{0\}\right\}=(A\setminus\{0\})^{-1}R.
\]
In particular, if  $S\subseteq R$
generates $R$ as an $A$-module,  then $S$ generates $\Quot(R)$ as a
$\Quot(A)$-vector space.
\end{proposition}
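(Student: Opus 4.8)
The key is that integrality lets us clear denominators cheaply. First I would prove the description of $\Quot(R)$. The inclusion $(A\setminus\{0\})^{-1}R\subseteq\Quot(R)$ is immediate since $A\setminus\{0\}\subseteq R\setminus\{0\}$. For the reverse inclusion, take an arbitrary element $r/s\in\Quot(R)$ with $r,s\in R$, $s\neq0$; it suffices to show that $1/s$ lies in $(A\setminus\{0\})^{-1}R$, i.e.\ that some nonzero $a\in A$ is an $R$-multiple of $s$. Here I invoke integrality of $s$ over $A$: there is a monic relation $s^{k}+a_{k-1}s^{k-1}+\dots+a_{1}s+a_{0}=0$ with $a_{i}\in A$. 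I may assume $a_{0}\neq0$ (if $a_{0}=0$, factor out the smallest power of $s$, using that $R$ is a domain and $s\neq0$, to get a shorter monic relation with nonzero constant term; this terminates). Then $a_{0}=-s(s^{k-1}+a_{k-1}s^{k-2}+\dots+a_{1})\in sR$, so $1/s=\bigl(-(s^{k-1}+\dots+a_{1})\bigr)/a_{0}\in(A\setminus\{0\})^{-1}R$, and hence $r/s=r\cdot(1/s)\in(A\setminus\{0\})^{-1}R$.

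For the ``in particular'' statement, suppose $S$ generates $R$ as an $A$-module. Any element of $\Quot(R)$ is, by the first part, of the form $r/a$ with $r\in R$ and $a\in A\setminus\{0\}$. Write $r=\sum_{j}c_{j}s_{j}$ as a finite $A$-linear combination of elements $s_{j}\in S$ with $c_{j}\in A$. Then $r/a=\sum_{j}(c_{j}/a)s_{j}$ is a $\Quot(A)$-linear combination of elements of $S$, so $S$ spans $\Quot(R)$ over $\Quot(A)$.

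\textbf{Main obstacle.} The only nontrivial point is the reduction to a monic relation with nonzero constant term; everything else is bookkeeping. One must be careful that this reduction is legitimate — it uses precisely that $R$ is an integral domain (to cancel powers of the nonzero element $s$) and that the original relation is monic (so after cancelling $s^{\ell}$ the leading term survives and the new relation is still monic, of strictly smaller degree, hence the process halts). I would state this cancellation step explicitly rather than leave it to the reader.
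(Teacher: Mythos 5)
Your proof is correct and follows essentially the same route as the paper's: both take a monic integral relation for the denominator, arrange (via minimality of the degree, or equivalently your explicit cancellation of powers of $s$ in the domain $R$) that the constant term $a_{0}$ is nonzero, and then observe that $a_{0}/s\in R$, so $1/s\in(A\setminus\{0\})^{-1}R$. Your treatment is slightly more explicit about the cancellation step and about the ``in particular'' clause, but the underlying argument is identical.
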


\begin{proof}
Assume $\frac{f}{g}\in\Quot(R)$ with $f,g\in R$ and  $g\neq 0$.  Let
\[
g^{t}+a_{t-1}g_{t-1}+\ldots+a_{1}g+a_{0}=0
\]
be a monic equation of minimal degree satisfied by $g$.
Then $a_{0}\ne 0$ and dividing this equation
by $g$ shows $\frac{a_{0}}{g}\in R$. Therefore $\frac{f}{g}=\frac{f}{a_{0}}\cdot
\frac{a_{0}}{g}\in (A\setminus\{0\})^{-1}R$.
\end{proof}

\bibliographystyle{plain}
\bibliography{OurBib}
\end{document}